\theoremstyle{plain}
\newtheorem{thm}{Theorem}[section]
\theoremstyle{remark}
\newtheorem{rem}{Remark}[section]
\numberwithin{equation}{section}
\DeclareMathOperator{\re}{Re}
\DeclareMathOperator{\td}{d\mspace{-2mu}}
\date{This manuscript commenced on Saturday 17 April 2010 and completed on Thursday 23 September 2010 in Tianjin City, China}
\begin{document}

\title[A completely monotonic function involving tri-gamma functions]
{A completely monotonic function involving the gamma and tri-gamma functions}

\author[F. Qi]{Feng Qi}
\address{Department of Mathematics, School of Science, Tianjin Polytechnic University, Tianjin City, 300387, China}
\email{\href{mailto: F. Qi <qifeng618@gmail.com>}{qifeng618@gmail.com}, \href{mailto: F. Qi <qifeng618@hotmail.com>}{qifeng618@hotmail.com}, \href{mailto: F. Qi <qifeng618@qq.com>}{qifeng618@qq.com}}
\urladdr{\url{http://qifeng618.wordpress.com}}

\begin{abstract}
In this paper we provide necessary and sufficient conditions on $a$ for the function
$
\frac{1}{2}\ln(2\pi)-x+\bigl(x-\frac{1}{2}\bigr)\ln x-\ln\Gamma(x)+\frac1{12}{\psi'(x+a)}
$
and its negative to be completely monotonic on $(0,\infty)$, where $a\ge0$ is a real number, $\Gamma(x)$ is the classical gamma function, and $\psi(x)=\frac{\Gamma'(x)}{\Gamma(x)}$ is the di-gamma function. As applications, some known results and new inequalities are derived.
\end{abstract}

\keywords{Completely monotonic function; logarithmically completely monotonic function; gamma function; tri-gamma function; necessary and sufficient condition; inequality}

\subjclass[2010]{Primary 26A48, 33B15; Secondary 44A10}%

\thanks{This paper was typeset using \AmS-\LaTeX}

\maketitle

\section{Introduction}
It is well known that the classical Euler's gamma function may be defined by
\begin{equation}\label{gamma-dfn}
\Gamma(x)=\int^\infty_0t^{x-1} e^{-t}\td t
\end{equation}
for $x>0$, that the logarithmic derivative of $\Gamma(x)$ is called psi or di-gamma function and denoted by
\begin{equation}
  \psi(x)=\frac{\td}{\td x}\ln\Gamma(x)=\frac{\Gamma'(x)}{\Gamma(x)}
\end{equation}
for $x>0$, that the derivatives $\psi'(x)$ and $\psi''(x)$ for $x>0$ are respectively called tri-gamma and tetra-gamma functions, and that the derivatives $\psi^{(i)}(x)$ for $i\in\mathbb{N}$ and $x>0$ are called polygamma functions.
\par
We recall from~\cite[Chapter~XIII]{mpf-1993} and~\cite[Chapter~IV]{widder} that a function $f(x)$ is said to be completely monotonic on an interval $I$ if it has derivatives of all orders on $I$ and satisfies
\begin{equation}\label{CM-dfn}
0\le(-1)^{n}f^{(n)}(x)<\infty
\end{equation}
for $x\in I$ and all integers $n\ge0$. If $f(x)$ is non-constant, then the inequality~\eqref{CM-dfn} is strict (see~\cite[p.~98]{Dubourdieu96} or~\cite[p.~82]{e-gam-rat-comp-mon}). The class of completely monotonic functions may be characterized by the celebrated Bernstein-Widder Theorem \cite[p.~160, Theorem~12a]{widder} which reads that a necessary and sufficient condition that $f(x)$ should be completely monotonic in $0\le x<\infty$ is that
\begin{equation} \label{berstein-1}
f(x)=\int_0^\infty e^{-xt}\td\alpha(t),
\end{equation}
where $\alpha(t)$ is bounded and non-decreasing and the integral converges for $0\le x<\infty$.
\par
For $x\in(0,\infty)$ and $a\ge0$, let
\begin{equation}\label{F-a(x)}
F_a(x)=\ln\Gamma(x)- \biggl(x-\frac12\biggr)\ln x-\frac1{12}\psi'(x+a).
\end{equation}
In~\cite[Theorem~1]{Merkle-rocky} M. Merkle proved that the function $F_0(x)$ is strictly concave and the function $F_a(x)$ for $a\geq{\frac12}$ is strictly convex on $(0,\infty)$. This was surveyed and reviewed in~\cite[p.~46, Section~4.3.3]{bounds-two-gammas.tex}.
\par
In recent years, some new results on the complete monotonicity of functions involving the gamma and polygamma functions have been obtained in~\cite{psi-proper-fraction-degree-two.tex, mon-funct-gamma-unit-ball.tex, unit-ball.tex, Yang-Fan-2008-Dec-simp.tex, Guo-Qi(201008).tex, property-psi-ii-final.tex, Lv-Sun-Chu-JIA-2011, QiBerg.tex, BAustMS-5984-RV.tex, SCM-2012-0142.tex, Open-TJM-2003-Banach.tex, Sriv-Guo-Qi-Computer-016, Zhao-Chu-Wang-AAA-2011, x-4-di-tri-gamma-upper-lower-combined.tex}, for example.
\par
The aims of this paper are to generalize the convexity of the function $F_a(x)$ and to derive known results and some new inequalities.

\section{Complete monotonicity}

The first aim of this paper is to generalize the convexity of $F_a(x)$ to complete monotonicity which may be stated as Theorem~\ref{Merkle-Convexity2Complete-Mon-thm1} below.

\begin{thm}\label{Merkle-Convexity2Complete-Mon-thm1}
For $x\in(0,\infty)$ and $a\ge0$, let
\begin{equation}
f_a(x)=\frac{1}{2}\ln(2\pi)-x+\biggl(x-\frac{1}{2}\biggr)\ln x-\ln\Gamma(x)+\frac1{12}{\psi'(x+a)}.
\end{equation}
Then the functions $f_0(x)$ and $-f_a(x)$ for $a\ge\frac12$ are completely monotonic on $(0,\infty)$.
\end{thm}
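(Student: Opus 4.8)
The plan is to realize $f_a$ as a Laplace transform with a nonnegative (respectively nonpositive) kernel and then invoke the Bernstein--Widder theorem quoted in the introduction. Starting from Binet's formula
\[
\ln\Gamma(x)=\Bigl(x-\tfrac12\Bigr)\ln x-x+\tfrac12\ln(2\pi)+\int_0^\infty\Bigl(\frac1{e^t-1}-\frac1t+\frac12\Bigr)\frac{e^{-xt}}{t}\td t
\]
together with the classical representation $\psi'(x+a)=\int_0^\infty\frac{t e^{-at}}{1-e^{-t}}e^{-xt}\td t$, the four elementary terms of $f_a$ cancel exactly against the polynomial part of the Binet integral, leaving
\[
f_a(x)=\int_0^\infty g_a(t)\,e^{-xt}\td t,\qquad g_a(t)=\frac1{t^2}-\frac1{2t}-\frac1{t(e^t-1)}+\frac{t\,e^{(1-a)t}}{12(e^t-1)}.
\]
By the Bernstein--Widder theorem it then suffices to prove $g_0(t)\ge0$ and $g_a(t)\le0$ for $a\ge\frac12$ on $(0,\infty)$.

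Since $12t^2(e^t-1)>0$ for $t>0$, the sign of $g_a$ equals that of its numerator $N_a(t)=(12-6t)e^t-6t-12+t^3e^{(1-a)t}$. The next observation is that $N_a(t)$ is strictly decreasing in $a$ for each fixed $t>0$, because only the final summand carries an $a$ and $e^{(1-a)t}$ decreases as $a$ grows; hence the claim $g_a\le0$ for all $a\ge\frac12$ reduces to the single endpoint $a=\frac12$.

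For $a=0$ I would expand $N_0(t)=e^t(t^3-6t+12)-6t-12$ as a Maclaurin series: the coefficients of $t^0,\dots,t^3$ cancel, and for $n\ge4$ the coefficient is $\frac1{n!}\bigl(n^3-3n^2-4n+12\bigr)=\frac{(n-2)(n-3)(n+2)}{n!}\ge0$, so $N_0(t)\ge0$. For $a=\frac12$ the same termwise approach applies to $N_{1/2}(t)=(12-6t)e^t-6t-12+t^3e^{t/2}$: now the coefficients of $t^0,\dots,t^4$ all vanish, and for $n\ge5$ the coefficient equals $\frac1{n!}\bigl[12-6n+\tfrac{n(n-1)(n-2)}{2^{n-3}}\bigr]$, which is $\le0$ precisely when $n(n-1)\le6\cdot2^{n-3}$; this inequality holds at $n=5$ and propagates by induction because $\frac{n+1}{n-1}\le2$ for $n\ge3$. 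Thus $N_{1/2}(t)\le0$, and the monotonicity in $a$ settles every $a\ge\frac12$.

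The main obstacle is the mixed-exponential endpoint $a=\frac12$, where the coexistence of $e^{t/2}$ and $e^t$ blocks the clean factorization available at $a=0$; the way around it is to abandon factoring and argue directly on the Maclaurin coefficients, reducing the problem to the elementary exponential-versus-polynomial estimate $n(n-1)\le6\cdot2^{n-3}$. A secondary item deserving explicit mention is the legitimacy of the integral representations and of interchanging differentiation with integration for $x>0$, which is routine but should be recorded before applying Bernstein--Widder.
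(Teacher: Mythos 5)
Your proof is correct, but it takes a genuinely different route from the paper's own proof of this theorem. The paper never represents $f_a$ itself as a Laplace transform at this stage: it computes the unit-shift difference and shows $[f_a(x)-f_a(x+1)]'=\int_0^\infty\phi_a(t)e^{-xt}\td t$, where $\phi_a(t)=-N_a(t)/(12te^t)$ in your notation; it establishes the sign of $\phi_0$ and $\phi_{1/2}$ by exactly the series computations you carry out (the same factorization $(n-2)(n-3)(n+2)$ and the same induction inequality $n(n-1)\le 6\cdot2^{n-3}$ appear there), and then needs a further telescoping/limit argument --- proving $\lim_{x\to\infty}f_a^{(i)}(x)=0$ from known bounds on $\psi$ and $\psi^{(i)}$, chaining the shift inequalities by induction, and finally getting $f_0>0$ and $f_{1/2}<0$ from $\lim_{x\to\infty}f_a(x)=0$ --- to convert complete monotonicity of the shifted-difference derivative into complete monotonicity of $f_0$ and $-f_{1/2}$; the case $a>\frac12$ is then handled by adding the completely monotonic difference $\frac1{12}\bigl[\psi'\bigl(x+\frac12\bigr)-\psi'(x+a)\bigr]$. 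Your direct representation $f_a(x)=\int_0^\infty g_a(t)e^{-xt}\td t$ via Binet's first formula short-circuits the entire telescoping step; it is in fact precisely the representation~\eqref{tri-psi-theta-eq} that the paper only derives later, in its second proof of Theorem~\ref{Merkle-Convexity2Complete-Mon-thm2}, and your observation that $N_a(t)$ is pointwise decreasing in $a$ is a simpler, equally valid substitute for the paper's tri-gamma-difference argument. What the paper's longer route buys is independence from Binet's formula; what yours buys is brevity and an explicit Bernstein measure for $f_0$ and $-f_a$. One caution: the Bernstein--Widder theorem as quoted in the introduction concerns $[0,\infty)$ and a \emph{bounded} measure, whereas your densities have infinite mass ($g_0(t)\sim t/12$ as $t\to\infty$, consistent with $f_0(0^+)=+\infty$), so the clean way to finish is to differentiate under the integral sign for $x>0$ (the easy direction), which is exactly the ``routine'' interchange point you flag at the end.
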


\begin{proof}
Using recursion formulas $\Gamma(x+1)=x\Gamma(x)$ and
\begin{equation}\label{psi'(x+1)-psi'(x)=-frac1-x2}
\psi'(x+1)-\psi'(x)=-\frac1{x^2}
\end{equation}
for $x>0$, an easy calculation yields
\begin{align*}
f_a(x)-f_a(x+1)&=1+\biggl(x+\frac{1}{2}\biggr)\ln\biggl(\frac{x}{x+1}\biggr) +\frac1{12}\bigl[\psi'(x+a)-\psi'(x+a+1)\bigr]\\
&=1+\biggl(x+\frac{1}{2}\biggr)\ln\biggl(\frac{x}{x+1}\biggr)+\frac1{12(x+a)^2}
\end{align*}
and
\begin{equation*}
[f_a(x)-f_a(x+1)]'=\frac{1}{2(x+1)}+\frac{1}{2x}-\frac{1}{6(a+x)^3} +\ln\biggl(\frac{x}{x+1}\biggr).
\end{equation*}
Utilizing formulas
\begin{equation}\label{Gamma(z)=k-z-int}
\Gamma(z)=k^z\int_0^\infty t^{z-1}e^{-kt}\td t
\end{equation}
and
\begin{equation}\label{ln-frac}
\ln\frac{b}a=\int_0^\infty\frac{e^{-au}-e^{-bu}}u\td u
\end{equation}
for $\re z>0$, $\re k>0$, $a>0$ and $b>0$, see~\cite[p.~255, 6.1.1 and p.~230, 5.1.32]{abram}, gives
\begin{equation}
\begin{split}\label{phi_a(t)-dfn}
[f_a(x)-f_a(x+1)]'&=\int_0^\infty\biggl[\frac12e^{-t}+\frac12 -\frac1{12}t^2e^{-at} +\frac{e^{-t}-1}t\biggr]e^{-xt}\td t\\
&\triangleq\int_0^\infty\phi_a(t)e^{-xt}\td t.
\end{split}
\end{equation}
It is easy to see that
$$
\phi_0(t)=-\frac{(t^3-6t+12) e^t-6 (t+2)}{12 te^t} =-\frac1{12e^t}\sum_{i=4}^\infty\frac{(i-3)\bigl(i^2-4\bigr)}{i!}t^{i-1}<0
$$
and
\begin{align*}
\phi_{1/2}(t)&=\frac{6(t-2)e^t-t^3e^{t/2}+6(t+2)}{12te^t} \\*
&=\frac1{12e^t}\sum_{i=5}^\infty\frac{(i-2)\bigl(3\cdot2^{i-2}-i^2+i\bigr)}{i!\cdot2^{i-3}}t^{i-1}\\*
&>0
\end{align*}
on $(0,\infty)$, where the inequality $3\cdot2^{i-2}-i^2+i>0$ for $i\ge5$ may be verified by induction. As a result, the function
$$
[f_0(x+1)-f_0(x)]'=f_0'(x+1)-f_0'(x)
$$
and
$$
[f_{1/2}(x)-f_{1/2}(x+1)]'=f_{1/2}'(x)-f_{1/2}'(x+1)
$$
are completely monotonic on $(0,\infty)$, that is,
$$
(-1)^k[f_0'(x+1)-f_0'(x)]^{(k)}=(-1)^kf_0^{(k+1)}(x+1)-(-1)^kf_0^{(k+1)}(x)\ge0
$$
and
$$
(-1)^k[f_{1/2}'(x)-f_{1/2}'(x+1)]^{(k)}=(-1)^kf_{1/2}^{(k+1)}(x)-(-1)^kf_{1/2}^{(k+1)}(x+1)\ge0
$$
for $k\ge0$. By induction, we have
\begin{multline}\label{f-0-induction}
(-1)^kf_0^{(k+1)}(x)\le(-1)^kf_0^{(k+1)}(x+1)\le (-1)^kf_0^{(k+1)}(x+2)\\* \le(-1)^kf_0^{(k+1)}(x+3)\le\dotsm\le(-1)^k\lim_{m\to\infty}f_0^{(k+1)}(x+m)
\end{multline}
and
\begin{multline}\label{f-1-2-induction}
(-1)^kf_{1/2}^{(k+1)}(x)\ge (-1)^kf_{1/2}^{(k+1)}(x+1)\ge (-1)^kf_{1/2}^{(k+1)}(x+2)\\
\ge(-1)^kf_{1/2}^{(k+1)}(x+3)\ge\dotsm\ge(-1)^k\lim_{m\to\infty}f_{1/2}^{(k+1)}(x+m)
\end{multline}
for $k\ge0$.
\par
It is not difficult to obtain
\begin{equation*}
f_a'(x)=\frac{\psi''(a+x)}{12}-\psi(x)+\ln x-\frac{1}{2x}
\end{equation*}
and
$$
f_a^{(i)}(x)=\frac{\psi^{(i+1)}(a+x)}{12}-\psi^{(i-1)}(x) +\frac{(-1)^i(i-2)!}{x^{i-1}}+\frac{(-1)^i(i-1)!}{2x^i},\quad i\ge2.
$$
In the light of the double inequalities
\begin{equation}\label{psi'ineq}
\ln x-\frac1x<\psi(x) <\ln x-\frac1{2x}
\end{equation}
and
\begin{equation}\label{qi-psi-ineq}
\frac{(i-1)!}{x^i}+\frac{i!}{2x^{i+1}}<(-1)^{i+1}\psi^{(i)}(x) <\frac{(i-1)!}{x^i}+\frac{i!}{x^{i+1}}
\end{equation}
for $x>0$ and $i\in\mathbb{N}$, see~\cite[Lemma~3]{MIA-1729.tex}, \cite[p.~107, Lemma~3]{theta-new-proof.tex-BKMS}, \cite[p.~79]{AAM-Qi-09-PolyGamma.tex} and \cite[Lemma~3]{subadditive-qi-guo-jcam.tex}, we immediately derive
$$
\lim_{x\to\infty}f_a'(x)=0\quad \text{and}\quad \lim_{x\to\infty}f_a^{(i)}(x)=0
$$
for $i\ge2$ and $a\ge0$. Combining this with~\eqref{f-0-induction} and~\eqref{f-1-2-induction}, we deduce
\begin{equation}\label{qi-psi-ineq-and}
(-1)^kf_0^{(k+1)}(x)\le0 \quad \text{and}\quad (-1)^kf_{1/2}^{(k+1)}(x)\ge0
\end{equation}
for $k\ge0$ on $(0,\infty)$.
\par
From the formula
\begin{equation}\label{p.-258-6.1.50-abram}
\ln\Gamma(z)=\biggl(z-\frac{1}{2}\biggr)\ln z-z+\frac{1}{2}\ln(2\pi) +2\int_0^\infty\frac{\arctan(t/z)}{e^{2\pi t}-1}\td t
\end{equation}
for $\re z>0$, see~\cite[p.~258, 6.1.50]{abram}, and the double inequality~\eqref{qi-psi-ineq} for $i=1$, we easily obtain
\begin{equation}\label{lim-f-a(x)=0}
\lim_{x\to\infty}f_a(x)=0
\end{equation}
for $a\ge0$. Inequalities in~\eqref{qi-psi-ineq-and} imply that the functions $-f_0(x)$ and $f_{1/2}(x)$ are increasing on $(0,\infty)$. Hence, we have
\begin{equation}\label{qi-psi-ineq-and-and}
f_0(x)>0 \quad\text{and}\quad f_{1/2}(x)<0
\end{equation}
on $(0,\infty)$.
\par
From~\eqref{qi-psi-ineq-and} and~\eqref{qi-psi-ineq-and-and}, we conclude that the functions $f_0(x)$ and $-f_{1/2}(x)$ are completely monotonic on $(0,\infty)$.
\par
It is clear that
$$
-f_a(x)=-f_{1/2}(x)+\frac1{12}\biggl[\psi'\biggl(x+\frac12\biggr)-\psi'(x+a)\biggr].
$$
From the facts that the tri-gamma function
\begin{equation}\label{psim-trigamma}
\psi'(x)=\int_{0}^{\infty}\frac{t}{1-e^{-t}}e^{-xt}\td t
\end{equation}
for $x>0$, see~\cite[p.~260, 6.4.1]{abram}, is completely monotonic on $(0,\infty)$, that the difference $f(x)-f(x+\alpha)$ for any given real number $\alpha>0$ of any completely monotonic function $f(x)$ on $(0,\infty)$ is also completely monotonic on $(0,\infty)$, and that the sum of finitely many completely monotonic functions on an interval $I$ is still completely monotonic on $I$, it readily follows that the function $-f_a(x)$ for $a>\frac12$ is also completely monotonic on $(0,\infty)$. The proof of Theorem~\ref{Merkle-Convexity2Complete-Mon-thm1} is complete.
\end{proof}

\section{Necessary and sufficient conditions}

The second aim of this paper is to answer a natural problem: Find the best constants $\alpha\ge0$ and $\beta\le\frac12$ such that $f_\alpha(x)$ and $-f_\beta(x)$ are both completely monotonic on $(0,\infty)$.

\begin{thm}\label{Merkle-Convexity2Complete-Mon-thm2}
The function $f_\alpha(x)$ is completely monotonic on $(0,\infty)$ if and only if $\alpha=0$, and so is the function $-f_\beta(x)$ if and only if $\beta\ge\frac12$.
\end{thm}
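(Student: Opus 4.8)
The plan is to notice that the sufficiency in both equivalences is already supplied by Theorem~\ref{Merkle-Convexity2Complete-Mon-thm1}, which gives that $f_0$ is completely monotonic and that $-f_a$ is completely monotonic for every $a\ge\frac12$. Hence only the two necessity directions remain, and by contraposition these reduce to showing that $f_\alpha$ fails to be completely monotonic whenever $\alpha>0$, and that $-f_\beta$ fails to be completely monotonic whenever $\beta<\frac12$. The single tool I would use is the weakest consequence of complete monotonicity, namely the case $n=0$ of~\eqref{CM-dfn}: a completely monotonic function is nonnegative on its interval. It therefore suffices, in each case, to exhibit a portion of $(0,\infty)$ on which the candidate function is negative.

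For the necessity of $\beta\ge\frac12$ I would examine $x\to\infty$. Inserting the classical expansions $\ln\Gamma(x)\sim\bigl(x-\frac12\bigr)\ln x-x+\frac12\ln(2\pi)+\frac1{12x}-\cdots$ and $\psi'(x+\beta)\sim\frac1{x+\beta}+\frac1{2(x+\beta)^2}+\cdots$ into $f_\beta$, the $\frac1{12x}$ contributions cancel and one is left with
\[
f_\beta(x)=\frac{1/2-\beta}{12x^2}+O\!\left(\frac1{x^3}\right),\qquad x\to\infty.
\]
If $\beta<\frac12$ the leading coefficient is strictly positive, so $f_\beta(x)>0$ for all large $x$, whence $-f_\beta(x)<0$ there, contradicting complete monotonicity of $-f_\beta$. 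If one prefers to avoid the full asymptotic series, the two-sided bounds~\eqref{psi'ineq} and~\eqref{qi-psi-ineq} together with~\eqref{lim-f-a(x)=0} can be arranged to pin down the sign of $f_\beta$ near infinity.

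For the necessity of $\alpha=0$ the test at infinity is inconclusive, and this is the crux: for $0<\alpha<\frac12$ the same expansion gives $f_\alpha(x)\sim\frac{1/2-\alpha}{12x^2}>0$, so nothing goes wrong at infinity. The remedy is to probe the opposite endpoint, $x\to0^+$. Writing $\ln\Gamma(x)=\ln\Gamma(x+1)-\ln x$ from $\Gamma(x+1)=x\Gamma(x)$, one has $\bigl(x-\frac12\bigr)\ln x-\ln\Gamma(x)=\bigl(x+\frac12\bigr)\ln x-\ln\Gamma(x+1)$, so that
\[
f_\alpha(x)=\frac12\ln(2\pi)-x+\Bigl(x+\frac12\Bigr)\ln x-\ln\Gamma(x+1)+\frac1{12}\psi'(x+\alpha).
\]
As $x\to0^+$ the terms $-x$ and $-\ln\Gamma(x+1)$ tend to finite limits, the term $\bigl(x+\frac12\bigr)\ln x\to-\infty$ (at the rate $\frac12\ln x$), and, crucially, for every fixed $\alpha>0$ the tri-gamma term stays bounded, $\psi'(x+\alpha)\to\psi'(\alpha)<\infty$. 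Hence $f_\alpha(x)\to-\infty$, so $f_\alpha$ is negative near the origin and cannot be completely monotonic; this disposes of \emph{all} $\alpha>0$ at once. By contrast, when $\alpha=0$ the tri-gamma term blows up like $\frac1{12x^2}\to+\infty$, dominating the logarithmic singularity, consistent with the positivity (indeed complete monotonicity) of $f_0$ already proved.

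The main obstacle is exactly this gap between the two ends: the behaviour at infinity fully settles the $\beta$-equivalence and detects $\alpha>\frac12$ (and, one order deeper, $\alpha=\frac12$), but it is blind to the band $0<\alpha<\frac12$. The decisive idea is that the correct discriminating limit for the $\alpha$-statement is the left boundary $x\to0^+$, where the sole structural difference between $\alpha=0$ and $\alpha>0$, namely whether $\psi'$ is singular or regular at the endpoint, is exposed. The only genuine care required is the correct resolution of the $\infty-\infty$ indeterminacy between $\bigl(x-\frac12\bigr)\ln x$ and $-\ln\Gamma(x)$ near the origin, which the recursion $\Gamma(x+1)=x\Gamma(x)$ handles cleanly.
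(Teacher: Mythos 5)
Your proposal is correct and follows essentially the same route as the paper's own first proof: sufficiency is quoted from Theorem~\ref{Merkle-Convexity2Complete-Mon-thm1}, necessity of $\beta\ge\frac12$ comes from the limit $x^2f_\beta(x)\to\frac{1/2-\beta}{12}$ as $x\to\infty$ (the paper obtains this from~\eqref{lem2} together with Stirling's series, you from the classical $\psi'$ expansion), and necessity of $\alpha=0$ comes from $f_\alpha(x)\to-\infty$ as $x\to0^+$ for $\alpha>0$ (the paper derives this limit from the integral representation~\eqref{p.-258-6.1.50-abram}, while your use of $\Gamma(x+1)=x\Gamma(x)$ is a cleaner, equivalent computation). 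The only weak spot is your parenthetical claim that \eqref{psi'ineq} and~\eqref{qi-psi-ineq} with~\eqref{lim-f-a(x)=0} could replace the asymptotic series at infinity: the $i=1$ case of~\eqref{qi-psi-ineq} has a factor-two gap in its second-order term, which is precisely why the paper invokes the sharper bound~\eqref{lem2} there; but your main argument never relies on that remark, so the proof stands.
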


\begin{proof}[The first proof]
The conclusion that the function $\phi_a(t)$ defined in~\eqref{phi_a(t)-dfn} is positive or negative on $(0,\infty)$ is equivalent to
\begin{equation}\label{eq.(3.1)}
a\gtrless-\frac{1}{t}\ln\biggl[\frac{12}{t^2}\biggl(\frac{e^{-t}+1}2+\frac{e^{-t}-1}t\biggr)\biggr] \triangleq-\varphi(t)=-\frac1t\ln\varphi_1(t),\quad t>0.
\end{equation}
By L'H\^ospital rule, we have
$$
\lim_{t\to0^+}\varphi_1(t)=6\lim_{t\to0^+}\frac{t(e^{-t}+1)+2(e^{-t}-1)}{t^3}
=2\lim_{t\to0^+}\frac{e^{-t}(e^t-t-1)}{t^2}=1
$$
and $\lim_{t\to\infty}\varphi_1(t)=0$. Hence, the function $\varphi(t)$ can be represented as
\begin{multline*}
\varphi(t)=\frac{\ln\varphi_1(t)-\ln\varphi_1(0)}t =\frac1t\int_0^t\frac{\varphi_1'(u)}{\varphi_1(u)}\td u \\*
=-\frac1t\int_0^t\frac{2(u-3)e^u+u^2+4u+6}{u[(u-2)e^u+u+2]}\td u\triangleq-\frac1t\int_0^t\varphi_2(u)\td u
\end{multline*}
for $t>0$. Since
\begin{align*}
\varphi_2'(u)&=-\frac{2 \bigl(u^2-6 u+6\bigr)e^{2 u}+\bigl(u^4+8 u^2-24\bigr)e^u +2\bigl(u^2+6u+6\bigr)}{u^2[(u-2)e^u+u+2]^2}\\
&=-\frac1{u^2[(u-2)e^u+u+2]^2}\\
&\quad\times\sum_{i=8}^\infty \frac{2^{i-1}\bigl(i^2-13 i+24\bigr)+i^4-6 i^3+19 i^2-14i-24}{i!}u^i\\
&<0
\end{align*}
for $u>0$, where
\begin{align*}
&\quad2^{i-1}\bigl(i^2-13 i+24\bigr)+i^4-6 i^3+19 i^2-14i-24\\
&=(1+1)^{i-1}\bigl(i^2-13 i+24\bigr)+i^4-6 i^3+19 i^2-14i-24\\
&>i\bigl(i^2-13 i+24\bigr)+i^4-6 i^3+19 i^2-14i-24\\
&=(i-5)i^3+6 i^2+2(5i-12)\\
&>0
\end{align*}
for $i\ge8$, the function $\varphi_2(u)$ is strictly decreasing on $(0,\infty)$, and, by~\cite[p.~418, Lemma 2.1]{qx3} and \cite[p.~3260, (1.6)]{qizh}, the arithmetic mean
$$
-\varphi(t)=\frac1t\int_0^t{\varphi_2(u)}\td u
$$
is strictly decreasing, and $\varphi(x)$ is strictly increasing, on $(0,\infty)$. From L'H\^ospital rule and limits
$$
\lim_{u\to0^+}\varphi_2(u)=\frac12\quad \text{and}\quad \lim_{u\to\infty}\varphi_2(u)=0,
$$
we obtain
$$
\lim_{t\to0^+}\varphi(t)=-\frac12\quad \text{and}\quad \lim_{t\to\infty}\varphi(t)=0.
$$
As a result, from~\eqref{eq.(3.1)}, it follows that
\begin{enumerate}
\item
when $a=0$, the function $[f_a(x+1)-f_a(x)]'$ is completely monotonic;
\item
when $a\ge\frac12$, the function $[f_a(x)-f_a(x+1)]'$ is completely monotonic.
\end{enumerate}
Along with the corresponding argument as in the proof of Theorem~\ref{Merkle-Convexity2Complete-Mon-thm1}, we obtain that the sufficient condition for $f_a(x)$ or $-f_a(x)$ to be completely monotonic on $(0,\infty)$ is $a=0$ or $a\ge\frac12$ respectively.
\par
Conversely, if $-f_a(x)$ is completely monotonic on $(0,\infty)$, then $f_a(x)$ is increasing and negative on $(0,\infty)$, so $x^2f_a(x)<0$ on $(0,\infty)$. From the double inequality
\begin{gather}\label{lem2}
\frac{1}{2x^2}-\frac{1}{6x^3} <\frac{1}{x}-\psi'(x+1) <\frac{1}{2x^2}-\frac{1}{6x^3}+\frac{1}{30x^5}
\end{gather}
on $(0,\infty)$, see~\cite[p.~305, Corollary~1]{qi-cui-jmaa}, it is easy to see that
\begin{equation}\label{frac12-infinity}
\lim_{x\to\infty}\biggl\{x^2\biggl[\psi'(x)-\frac1x\biggr]\biggr\}=\frac12.
\end{equation}
Using the asymptotic formula
\begin{gather}
\ln\Gamma(z)\sim\biggl(z-\frac12\biggr)\ln z-z+\frac12\ln(2\pi)+\frac1{12z}-\frac1{360z^3} +\frac{1}{1260z^5}-\dotsm
\end{gather}
as $z\to\infty$ in $\lvert \arg z\rvert<\pi$, see~\cite[p.~257, 6.1.41]{abram}, gives
\begin{multline}\label{frac(a)(12)-infinity}
\lim_{x\to\infty}\biggl\{x^2\biggl[\frac{1}{2}\ln(2\pi)-x +\biggl(x-\frac{1}{2}\biggr) \ln x -\ln\Gamma(x)+\frac1{12(x+a)}\biggr]\biggr\}\\*
=\lim_{x\to\infty}\biggl\{x^2\biggl[\frac1{12(x+a)}-\frac1{12x}+O\biggl(\frac1{x^2}\biggr)\biggr]\biggr\}
=-\frac{a}{12}.
\end{multline}
In virtue of~\eqref{frac12-infinity} and~\eqref{frac(a)(12)-infinity}, we obtain
\begin{multline*}
x^2f_a(x)=x^2\biggl[\frac{1}{2}\ln(2\pi)-x +\biggl(x-\frac{1}{2}\biggr) \ln x-\ln\Gamma(x) +\frac1{12(x+a)}\biggr]\\*
+\frac{x^2}{12}\biggl[{\psi'(x+a)}-\frac1{x+a}\biggr] \to-\frac{a}{12}+\frac1{12}\cdot\frac12
\end{multline*}
as $x$ tends to $\infty$. So the necessary condition for $-f_a(x)$ to be completely monotonic on $(0,\infty)$ is $a\ge\frac12$.
\par
If $f_a(x)$ for $a>0$ is completely monotonic on $(0,\infty)$, then $f_a(x)$ should be decreasing and positive on $(0,\infty)$, but utilizing~\eqref{p.-258-6.1.50-abram} leads to
\begin{align*}
\lim_{x\to0^+}f_a(x)&=\lim_{x\to0^+}\biggl[\frac{1}{2}\ln(2\pi)-x+\biggl(x-\frac{1}{2}\biggr)\ln x-\ln\Gamma(x)\biggr]+\frac1{12}{\psi'(a)}\\
&=\frac1{12}{\psi'(a)}-2\lim_{x\to0^+}\int_0^\infty\frac{\arctan(t/x)}{e^{2\pi t}-1}\td t\\
&=\frac1{12}{\psi'(a)}-\pi\int_0^\infty\frac1{e^{2\pi t}-1}\td t\\
&=-\infty
\end{align*}
which leads to a contradiction. So the necessary condition for $f_a(x)$ to be completely monotonic on $(0,\infty)$ is $a=0$. The first proof of Theorem~\ref{Merkle-Convexity2Complete-Mon-thm2} is complete.
\end{proof}

\begin{proof}[The second proof]
The famous Binet's first formula of $\ln\Gamma(x)$ for $x>0$ is given by
\begin{equation}\label{ebinet}
\ln \Gamma(x)= \left(x-\frac{1}{2}\right)\ln x-x+\ln \sqrt{2\pi}\,+\theta(x),
\end{equation}
where
\begin{equation}\label{ebinet1}
\theta(x)=\int_{0}^{\infty}\biggl(\frac{1}{e^{t}-1}-\frac{1}{t}
+\frac{1}{2}\biggr)\frac{e^{-xt}}{t}\td t
\end{equation}
for $x>0$ is called the remainder of Binet's first formula for the logarithm of the gamma function $\Gamma(x)$. See \cite[p.~11]{magnus} or \cite[p.~462]{Extended-Binet-remiander-comp.tex}. Combining this with the integral representation
\begin{equation}\label{psim}
\psi ^{(k)}(x)=(-1)^{k+1}\int_{0}^{\infty}\frac{t^{k}}{1-e^{-t}}e^{-xt}\td t
\end{equation}
for $x>0$ and $k\in\mathbb{N}$, see~\cite[p.~260, 6.4.1]{abram}, yields
\begin{equation}
\begin{split}\label{tri-psi-theta-eq}
  f_a(x)&=\frac1{12}\psi'(x+a)-\theta(x) \\
  &=\int_{0}^{\infty}\biggl[\frac{te^{(1-a)t}}{12(e^{t}-1)}-\frac1{t}\biggl(\frac{1}{e^{t}-1} -\frac{1}{t}+\frac{1}{2}\biggr)\biggr]{e^{-xt}}\td t.
\end{split}
\end{equation}
It is not difficult to see that the positivity and negativity are equivalent to
\begin{equation}\label{q(t)-dfn-eq}
  a\lessgtr-\frac1t\ln\biggl[\frac{12(e^t-1)}{t^2e^t}\biggl(\frac{1}{e^{t}-1} -\frac{1}{t}+\frac{1}{2}\biggr)\biggr] =-\varphi(t),
\end{equation}
where $\varphi(t)$ is defined by~\eqref{eq.(3.1)}. The rest proof is the same as in the first proof of Theorem~\ref{Merkle-Convexity2Complete-Mon-thm2}. The second proof of Theorem~\ref{Merkle-Convexity2Complete-Mon-thm2} is complete.
\end{proof}

\section{Remarks}
In this section, we list more results in the form of remarks.

\begin{rem}\label{rem-merkle-mon-4.1}
From proofs of Theorem~\ref{Merkle-Convexity2Complete-Mon-thm1} and Theorem~\ref{Merkle-Convexity2Complete-Mon-thm2}, we can abstract a general and much useful conclusion below.

\begin{thm}
A function $f(x)$ defined on an infinite interval $I$ tending to $\infty$ is completely monotonic if and only if
\begin{enumerate}
  \item
  there exist positive numbers $\alpha_i$ such that the differences
\begin{equation}
  (-1)^i[f(x)-f(x+\alpha_i)]^{(i)}
\end{equation}
  are nonnegative for all integers $i\ge0$ on $I$;
  \item
  the limits
\begin{equation}
  \lim_{x\to\infty}\bigl[(-1)^if^{(i)}(x)\bigr]=a_i\ge0
\end{equation}
  exist for all integers $i\ge0$.
\end{enumerate}
\end{thm}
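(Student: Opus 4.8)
The plan is to establish the two implications separately, relying throughout on the elementary observation that if one sets $g(x)=(-1)^if^{(i)}(x)$, then $g'(x)=(-1)^if^{(i+1)}(x)=-\bigl[(-1)^{i+1}f^{(i+1)}(x)\bigr]$. Consequently, whenever $f$ is completely monotonic, every function $(-1)^if^{(i)}$ is \emph{simultaneously} nonnegative and nonincreasing on $I$. This single remark drives both directions of the equivalence, and the whole theorem is really an axiomatization of the telescoping-and-limit mechanism already exhibited in the proofs of Theorems~\ref{Merkle-Convexity2Complete-Mon-thm1} and~\ref{Merkle-Convexity2Complete-Mon-thm2}.

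For the necessity, which is the easy direction, I would assume $f$ is completely monotonic and verify conditions (1) and (2) directly. Since $(-1)^if^{(i)}$ is nonincreasing by the observation above, for any positive number $\alpha_i$ one has $(-1)^if^{(i)}(x)\ge(-1)^if^{(i)}(x+\alpha_i)$, which is exactly $(-1)^i[f(x)-f(x+\alpha_i)]^{(i)}\ge0$; hence (1) holds for an arbitrary choice of the $\alpha_i$. Moreover $(-1)^if^{(i)}$ is nonincreasing and bounded below by $0$, so its limit as $x\to\infty$ exists and is nonnegative, which is (2).

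The substantive direction is the sufficiency, and here I would reproduce the iteration argument of~\eqref{f-0-induction} and~\eqref{f-1-2-induction}. Fix an integer $i\ge0$ and again write $g=(-1)^if^{(i)}$. Condition (1) says $g(x)\ge g(x+\alpha_i)$ on $I$; iterating this inequality along the arithmetic progression $x,\,x+\alpha_i,\,x+2\alpha_i,\dots$ gives $g(x)\ge g(x+m\alpha_i)$ for every positive integer $m$. Because $I$ is unbounded above, $x+m\alpha_i\in I$ and $x+m\alpha_i\to\infty$ as $m\to\infty$; letting $m\to\infty$ and invoking condition (2) along this sequence yields $g(x)\ge a_i\ge0$, that is, $(-1)^if^{(i)}(x)\ge0$. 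Since $i$ and $x\in I$ were arbitrary, $f$ is completely monotonic.

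The only place where the two hypotheses are genuinely both needed—and thus the main point requiring care—is the passage to the limit. Condition (1) alone forces $(-1)^if^{(i)}$ to decrease only along discrete shifts by $\alpha_i$, so without condition (2) the chain $g(x)\ge g(x+\alpha_i)\ge\cdots$ carries no lower control and could descend to a negative value or diverge; condition (2) supplies exactly the nonnegative limit $a_i$ that the monotone chain must respect. Conversely, the existence of the limit in (2) is what lets the value along the subsequence $\{x+m\alpha_i\}$ be identified with the full limit $a_i$, so no separate convergence argument for the subsequence is required. Beyond stating these observations cleanly I anticipate no real obstacle, the result being precisely the abstract skeleton of the two preceding proofs.
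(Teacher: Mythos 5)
Your proposal is correct and is essentially the paper's own approach: the paper gives no separate proof of this theorem, presenting it as an abstraction of the telescoping-and-limit mechanism in the proofs of Theorems~\ref{Merkle-Convexity2Complete-Mon-thm1} and~\ref{Merkle-Convexity2Complete-Mon-thm2} (the chains~\eqref{f-0-induction} and~\eqref{f-1-2-induction} followed by passage to the limit at infinity), which is precisely what your sufficiency argument writes out. Your necessity direction, deducing both the shift inequalities and the existence of nonnegative limits from the fact that each $(-1)^if^{(i)}$ is nonnegative and nonincreasing, is the routine converse and is also correct.
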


This is essentially a generalization of \cite[p.~126, Lemma~2.2]{subadditive-qi.tex}, \cite[Lemma~1]{notes-best-simple-open-jkms.tex}, \cite[p.~223, Lemma~2.1]{property-psi.tex}, \cite[p.~107, Lemma~4]{theta-new-proof.tex-BKMS}, \cite[p.~526, Lemma~2.1]{notes-best-simple-equiv.tex}, \cite[p.~1981, Lemma~2.5]{notes-best-simple-cpaa.tex}, and~\cite[p.~2155, Lemma~1]{subadditive-qi-guo-jcam.tex}, which was also implicitly applied in~\cite{BNGuo-FQi-HMSrivastava2010.tex, AAM-Qi-09-PolyGamma.tex}, for example.
\end{rem}

\begin{rem}\label{rem-sec-2.1}
Because
\begin{equation}
f_a(x)=\frac12\ln(2\pi)-x-F_a(x)
\end{equation}
and $f_a''(x)=-F_a''(x)$ on $(0,\infty)$, the concavity of $F_0(x)$ and the convexity of $F_a(x)$ for $a\ge\frac12$ obtained in~\cite[Theorem~1]{Merkle-rocky} can be concluded readily from the complete monotonicity of $f_\alpha(x)$ and $-f_\beta(x)$ on $(0,\infty)$ established in Theorems~\ref{Merkle-Convexity2Complete-Mon-thm1} and~\ref{Merkle-Convexity2Complete-Mon-thm2}.
\end{rem}

\begin{rem}
We also recall from \cite{Atanassov, minus-one, auscmrgmia} that a function $f(x)$ is said to be logarithmically completely monotonic on an interval $I$ if it has derivatives of all orders on $I$ and its logarithm $\ln f(x)$ satisfies
\begin{equation}\label{lcm-dfn}
0\le(-1)^k[\ln f(x)]^{(k)}<\infty
\end{equation}
for all integers $k\ge1$ on $I$. It was proved once again in \cite{CBerg, absolute-mon-simp.tex, compmon2, minus-one, schur-complete} that logarithmically completely monotonic functions on an interval $I$ must be completely monotonic on $I$, but not conversely. By the way, the preprint \cite{minus-one} were extended to, divided into, and formally published as \cite{compmon2, minus-one-JKMS.tex}, the preprint~\cite{auscmrgmia} was brought out as~\cite{e-gam-rat-comp-mon}, and the preprint~\cite{schur-complete} was modified and split into~\cite{absolute-mon-simp.tex, Mon-Two-Seq-AMEN.tex}. For more information on the history and properties of logarithmically completely monotonic functions, please refer to~\cite{Atanassov, CBerg}, \cite[pp.~21\nobreakdash--23]{absolute-mon-simp.tex}, \cite[pp.~5\nobreakdash--6, Section~1.5]{bounds-two-gammas.tex}, \cite[pp.~2154\nobreakdash--2155, Remark~8]{subadditive-qi-guo-jcam.tex} and closely related references therein.
\par
For $a\ge0$, let
\begin{equation}
  g_a(x)=-\ln\Gamma(x)+\biggl(x-\frac12\biggr)\ln x-x+\frac1{12}\psi'(x+a),\quad x>0.
\end{equation}
It is obvious that
\begin{equation}\label{f-a--g-a(x)}
f_a(x)=\frac12\ln(2\pi)+g_a(x)
\end{equation}
on $(0,\infty)$ for $a\ge0$, with the limit~\eqref{lim-f-a(x)=0}. It is not difficult to see that Theorem~1 in~\cite[pp.~338\nobreakdash--341]{Alzer1} may be reworded as follows: For $0<s<1$ the function
$
  \exp[{g_a(x+s)}-{g_a(x+1)}]
$
is logarithmically completely monotonic on $(0,\infty)$ if and only if $a\ge\frac12$, and so is the function
$
  \exp[{g_a(x+1)}-{g_a(x+s)}]
$
if and only if $a=0$. This was reviewed in~\cite[pp.~38\nobreakdash--39, Section~3.18]{bounds-two-gammas.tex}.
\par
In virtue of complete monotonicity of $f_a(x)$ and Remark~\ref{rem-merkle-mon-4.1}, it follows that the difference
$f_a(x+s)-f_a(x+t)=g_a(x+s)-g_a(x+t)$
for $t>s$ and $a\ge0$ is completely monotonic with respect to $x\in(-s,\infty)$ if and only if $a=0$, and so is its negative if and only if $\alpha\ge\frac12$. Therefore, by the second item of Theorem~5 in~\cite[p.~1286]{minus-one-JKMS.tex}, it follows that the function
$
  \exp[{f_a(x+s)}-{f_a(x+t)}]
$
for $t>s$ and $a\ge0$ is logarithmically completely monotonic with respect to $x$ on $(-s,\infty)$ if and only if $a\ge\frac12$, and so is the function
$
  \exp[{f_a(x+t)}-{f_a(x+s)}]
$
if and only if $a=0$. In other words, the function
\begin{equation}\label{li-ext-fun}
\frac{\Gamma(x+s)}{\Gamma(x+t)}\cdot\frac{(x+t)^{x+t-1/2}}{(x+s)^{x+s-1/2}}
\exp\biggl[s-t+\frac{\psi'(x+t+\alpha) -\psi'(x+s+\alpha)}{12}\biggr]
\end{equation}
for $s<t$ and $\alpha\ge0$ is logarithmically completely monotonic with respect to $x\in(-s,\infty)$ if and only if $\alpha\ge\frac12$, and so is the reciprocal of~\eqref{li-ext-fun} if and only if $\alpha=0$.
\par
The monotonicity of~\eqref{li-ext-fun} and its reciprocal implies that the double inequality
\begin{multline}\label{li-ext-fun-ineq}
\exp\biggl[t-s+\frac{\psi'(x+s+\beta) -\psi'(x+t+\beta)}{12}\biggr] \le\frac{\Gamma(x+s)}{\Gamma(x+t)}\cdot\frac{(x+t)^{x+t-1/2}}{(x+s)^{x+s-1/2}}\\*
\le \exp\biggl[t-s+\frac{\psi'(x+s+\alpha) -\psi'(x+t+\alpha)}{12}\biggr]
\end{multline}
for $\alpha>\beta\ge0$, $s<t$ and $x\in(-s,\infty)$ is valid if and only if $\beta=0$ and $\alpha\ge\frac12$.
\par
In conclusion, from complete monotonicity in Theorem~\ref{Merkle-Convexity2Complete-Mon-thm2}, Theorem~1 and Corollary~1 in~\cite{Alzer1} together with Theorem~3 and its corollary in~\cite{laj-7.pdf} may be deduced and extended straightforwardly. This means that Theorem~\ref{Merkle-Convexity2Complete-Mon-thm2} is stronger not only than~\cite[Theorem~1]{Alzer1} but also than~\cite[Theorem~3]{laj-7.pdf}.
\end{rem}

\begin{rem}
As a consequence of Theorem~\ref{Merkle-Convexity2Complete-Mon-thm2}, the following double inequality is easily obtained: For $x>0$, the double inequality
\begin{equation}
\sqrt{2\pi}\,x^{x-1/2}\exp\biggl[\frac{\psi'(x+\beta)}{12}-x\biggr]<\Gamma(x) <\sqrt{2\pi}\,x^{x-1/2}\exp\biggl[\frac{\psi'(x+\alpha)}{12}-x\biggr]
\end{equation}
is valid if and only if $\alpha=0$ and $\beta\ge\frac12$.
\par
For more inequalities for bounding the gamma function $\Gamma(x)$, please refer to~\cite{theta-new-proof.tex-BKMS, note-on-li-chen.tex, refine-Ivady-gamma-PMD.tex} and closely related references therein.
\end{rem}

\begin{rem}
The equation~\eqref{tri-psi-theta-eq} in the second proof of Theorem~\ref{Merkle-Convexity2Complete-Mon-thm2} tells us the integral representations of the completely monotonic functions $f_0(x)$ and $-f_a(x)$ for $a\ge\frac12$.
\end{rem}

\begin{rem}
For the history, background, motivations, and recent developments of this topic, please refer to the survey and expository paper~\cite{bounds-two-gammas.tex, Wendel-Gautschi-type-ineq-Banach.tex} and plenty of references therein.
\end{rem}

\end{document}